\date{}
\theoremstyle{plain}
\newtheorem{theorem}{Theorem}[section]
\newtheorem{lemma}[theorem]{Lemma}
\newtheorem{proposition}[theorem]{Proposition}
\theoremstyle{definition}
\theoremstyle{definition}
\numberwithin{equation}{section}
\newcommand{\RR}{\mathrm{I\kern-0.20emR}}
\newcommand{\E}{\mathrm{e}\kern0.2pt}%
\newcommand{\be}{\begin{equation}}
\newcommand{\ee}{\end{equation}}
\begin{document}

\title{{\bf N-modal steady water waves with vorticity}}

\author{Vladimir Kozlov$^a$, Evgeniy Lokharu$^a$}

\date{}

\maketitle

\vspace{-10mm}

\begin{center}
$^a${\it Department of Mathematics, Link\"oping University, S--581 83 Link\"oping}\\

\vspace{1mm}

E-mail: vlkoz@mai.liu.se / V.~Kozlov; evgeniy.lokharu@liu.se / E.~Lokharu \\

\end{center}

\begin{abstract}

Two-dimensional steady gravity driven water waves with vorticity are considered. Using a multidimensional bifurcation argument, we prove the existence of small-amplitude periodic steady waves with an arbitrary number of crests per period. The role of bifurcation parameters is played by the roots of the dispersion equation.

\end{abstract}

\section{Introduction}

The existence theory for rotational steady water waves of finite depth goes back to the paper \cite{DJ} of Dubreil-Jacotin published in 1934. However only after the paper \cite{CS} of Constantin and Strauss appeared in 2004 the theory of rotational steady waves has attracted a lot of attention from the mathematical community. The latter paper was devoted to large amplitude Stokes waves with an arbitrary vorticity distribution. Stokes waves are periodic travelling waves with exactly one crest and one trough in every minimal period. A natural question can be raised here: are there periodic waves with more complicated geometry? To simplify the discussion we will restrict this question to the case of small-amplitude periodic waves without surface tension. In the class of periodic uniderectional waves with vorticity only Stokes waves exist (see \cite{GW}, \cite{KKL_BL} and \cite{KK_Disp}). Thus, to answer the question affirmatively one shall consider a wider class of waves allowing internal stagnation or critical layers. The first positive result in this direction was obtained (see \cite{En2}) in 2011 by Ehrnstr\"om, Escher and Wahl\'en . They prove existence of bimodal travelling waves of small amplitude (see also \cite{War}). A bimodal wave has several crests per period and the first approximation of the surface profile is given by a combination of two simple modes: cosine functions with different wavelengths. Later, in 2015 Ehrnstr\"om and Wahl\'en proved (see \cite{En3}) the existence of trimodal waves for which the first approximation is given by a combination of three basic modes. Both papers use a similar technique based on a bifurcation argument. The crucial role there is the choice of the bifurcation parameters. The main difficulty for the higher order bifurcation is the fact that the number of natural parameters of the problem is limited. Normally, one may consider only the Bernoulli constant or the wavelength as a bifurcation parameters. In the case of a linear vorticity, the derivative of the vorticity function (which is a constant in this case) is another possible parameter. The main novelty of our approach is that an arbitrary number of the bifurcation parameters are used. These parameters are the roots of the dispersion equation while the corresponding vorticity is a perturbation of a linear function. Our argument is different to the one used in \cite{En3} to construct trimodal waves. In \cite{En3} the authors use a Lyapunov-Schmidt reduction directly to the initial infinite-dimensional problem. This complicates calculation of the determinant of the matrix of the reduced system which is essential for the proof of the existence. In contrast, we first reduce the problem to a finite dimensional system for which the linear part is given by an ordinary differential operators and then use Lyapunov-Schmidt reduction. After that the corresponding matrix is diagonal and then we can quite straightforward prove the existence of symmetric and periodic waves with an arbitrary number of basic modes.

The paper is organized as follows. In the beginning of Sect. 2 we formulate the problem in a suitable way. In Sect. 2.1 and 2.2 we discuss stream solutions and the linear approximation for the initial problem. The Sect. 3 is an essential part of the proof and concerns to the inverse spectral problem related to the dispersion equation. Here we prove Theorem \ref{th:ISP} allowing to use roots of the dispersion equation as bifurcation parameters. Next, in Sect. 4, we first formulate our main result. The rest of the paper is dedicated to the proof of the main theorem. We start by writing the problem in an operator form and then in Sect. 4.4 we reduce it to a finite-dimensional system of equations. The rest of the proof is contained in Sect. 4.5, where we perform Lyapunov-Schmidt reduction to the reduced system. In contrast to the classical bifurcation theory, we find solutions only for the values of small parameters $t = (t_1,...,t_N)$ such that $|t|^2 < \epsilon |t_j|$ for some small $\epsilon$.

\section{Statement of the Problem}

Let an open channel of a uniform rectangular cross-section be bounded below by a
horizontal rigid bottom and let water occupying the channel be bounded above by a
free surface not touching the bottom. In an appropriate Cartesian coordinates $(x,y)$,
the bottom coincides with the $x$-axis and gravity acts in the negative
$y$-direction. The steady water motion is supposed to be two-dimensional and rotational; the
surface tension is neglected on the free surface of the water, where the pressure is
constant. These assumptions and the fact that the water is incompressible allow us to
seek the velocity field in the form $(\psi_y, -\psi_x)$, where $\psi (x,y)$ is
referred to as the {\it stream function}. The vorticity distribution $\omega$ is
supposed to be a prescribed smooth function depending only on the values of $\psi$.

We choose the frame of reference so that the velocity field is time-independent as
well as the unknown free-surface profile. The latter is assumed to be the graph of a function
$y = \eta (x)$, $x \in \Bbb R$, where $\eta$ is a positive continuous function, and
so the longitudinal section of the water domain is ${\cal D} = \{ x \in \Bbb R , \ 0 < y <
\eta (x) \}$. We use the non-dimensional variables proposed by Keady and Norbury \cite{KN}. Namely, lengths and velocities are scaled to $(Q^2/g)^{1/3}$ and $(Qg)^{1/3}$ respectively. Here $Q$ and $g$ are the dimensional quantities for the rate of flow and the gravity
acceleration respectively, whereas $(Q^2/g)^{1/3}$ is the depth of the critical
uniform stream in the irrotational case. This scaling leads to to an equivalent problem, provided the mass flux is not zero.

The following free-boundary problem for $\psi$ and $\eta$ has long been known (cf. \cite{KN}):
\begin{eqnarray}
&& \psi_{xx} + \psi_{yy} + \omega (\psi) = 0, \quad (x,y) \in {\cal D} ; \label{eq:lapp} \\
&& \psi (x,0) = 0, \quad x \in \Bbb R ; \label{eq:bcp} \\ && \psi (x,\eta (x)) = 1,
\quad x \in \Bbb R ; \label{eq:kcp} \\ && |\nabla \psi (x,\eta (x))|^2 + 2 \eta (x) = 3
r, \quad x \in \Bbb R . \label{eq:bep}
\end{eqnarray}
In the condition \eqref{eq:bep} (Bernoulli's equation), $r$ is a constant considered as
a problem's parameter and referred to as Bernoulli's constant/the total head. The problem \eqref{eq:lapp}-\eqref{eq:bep} describes two-dimensional steady water waves with a nonzero mass flux. In what follows, we will assume that $\omega \in C^{\infty}(\RR)$.

For the further analysis of the problem it is convenient to rectify the domain $\cal D$ by scaling the vertical variable to
\[
z = y \frac{d}{\eta (x)},
\]
while the horizontal coordinate remains unchanged. The parameter $d>0$, which by meaning is the depth of a uniform stream to be determined later, can be chosen arbitrarily. In other words, for a given $d>0$ we will find a vorticity function and a stream solution for which we prove the existence of multi-modal waves of small amplitude bifurcating from the uniform stream.

Thus, the domain $\cal D$ transforms into the strip $S = \RR \times (0,d)$. Next, we introduce a new unknown function $\hat{\Phi} (x, z)$ on
$\bar S$ by
\[
\hat{\Phi} (x, z) = \psi \left( x, \frac{z}{d} \, \eta (x) \right). \label{zeta/phi}
\]
A direct calculation shows that the problem \eqref{eq:lapp}-\eqref{eq:kcp} reads in new variables as
\begin{align}
& \left[ \hat{\Phi}_x - \frac{z \eta_x}{\eta} \hat{\Phi}_z \right]_{x} - \frac{z \eta_x}{\eta} \left[ \hat{\Phi}_x - \frac{z \eta_x}{\eta} \hat{\Phi}_z \right]_z + \left( \frac{d}{\eta} \right)^2 \hat{\Phi}_{zz} + \omega(\hat{\Phi}) = 0; \label{lapp1} \\
& \hat{\Phi}(x,0) = 0, \ \ \ \hat{\Phi}(x,d) = 1, \ \ x \in \RR; \label{boc1}
\end{align}
while the Bernoulli equation \eqref{eq:bep} becomes
\begin{equation} \label{bep1}
\hat{\Phi}_z^2 = \frac{\eta^2}{d^2} \left( \frac{3r - 2 \eta}{1 + \eta_x^2} \right).
\end{equation}
In what follows by a solution of \eqref{lapp1}-\eqref{bep1} we mean a pair $(\hat{\Phi},\eta) \in C^{2,\alpha}(\bar{S})\times C^{1,\alpha}(\RR)$ satisfying these equations.
%$\hat{\Phi} \in C^{2,\alpha}(\overline{S})$, $\eta \in C^{1,%\alpha}(\RR)$ and $\omega \in C^{\infty}(\RR)$, where $\alpha \in %(0,1)$ is fixed and remains unchanged throughout the paper.

\subsection{Stream solutions}

A pair $(u(y), d)$, where $u:[0,d] \to \RR$ and $d>0$ is called a stream solution corresponding to the vorticity function $\omega$ and the depth $d$, if
\begin{equation} \label{streamsol}
u'' + \omega(u) = 0, \ \ u(0) = 0, \ \ u(d) = 1.
\end{equation}
The corresponding Bernoulli constant $r$ is calculated by
\[
3r = [u'(d)]^2 + 2d.
\]
Note that in our definition of a stream solution the Bernoulli constant is not fixed.
%Let us assume that the vorticity $\omega$ is fixed. Then one may look for a family of all solutions of \eqref{streamsol} for different values of parameters $d$ and $r$. In general, there exist constant $r_c$ and $d_0$ such that $r \geq r_c$ and $d \leq d_0$ for any solution $(u,d)$ of \eqref{streamsol}.

In this paper we will be interested in the case when the vorticity $\omega$ is a small perturbation of a linear vorticity $\omega_0(p) = bp$, where $b$ is a positive constant:
\begin{equation} \label{vort}
\omega(p) = \omega_{\delta}(p) = b p + \delta_1 \omega_1(p) + ... + \delta_N \omega_N(p).
\end{equation}
Here the coefficients $\delta_j$ are small and compactly supported functions $\omega_j \in C^{\infty}_0(0,1)$ will be chosen later. The choice of the sign of the constant $b$ is crucial because negative values of $b$ give rise only to unidirectional uniform streams, for which the dispersion equation has at most one root and all periodic small-amplitude waves are Stokes waves.

We will require the constants $b$ and $d$ to satisfy
\begin{equation} \label{vortcond}
\sqrt{b} \neq \frac{\pi j}{2d}, \ \ j=1,2,....
\end{equation}
The assumption \eqref{vortcond} guarantees that the problem \eqref{streamsol} with the vorticity $\omega$ of the form \eqref{vort} possesses a unique solution $u$ with $u'(d) \neq 0$, provided all $\delta_j$ are small enough. To prove that \eqref{streamsol} has a unique solution with $\omega$ given by \eqref{vort}, we write $u = u_0 + v$ and consider the problem
\[
v'' + b v + \sum_{j=1}^N \delta_j \omega_j(u_0 + v) = 0, \ \ v(0) = v(d) = 0.
\]
We can rewrite this problem in an operator form as follows:
\[
Lv + Q(v,\delta) = 0,
\]
where the operators
\[
L,Q:\{ u \in C^{2,\alpha}([0,d]): u(0) = u(d) = 0 \} \to C^{\alpha}([0,d])
\]
are defined by $Lv = v'' + bv$ and $Q(v, \delta) = \sum_{j=1}^N \delta_j \omega_j(u_0 + v)$. The assumption \eqref{vortcond} guarantees that operator $L$ has a trivial kernel, which ensures it's invertibility. This allows us to apply implicit function theorem in order to find a family of stream solutions $u(z;\delta)$ in a neighbourhood of $\delta = 0$ which depends analytically on $\delta$ (see \cite{KP} for a good overview). Note that \eqref{vortcond} also implies that $u_y(d; \delta) \neq 0$ for all small $\delta$, which ensures that there is no stagnation on the surface.  %The sign of $u_y(d)$ plays no role in our proof and one can show that multi-modal waves appear for both positive and negative values of $u_y(d)$.

\subsection{Linear approximation of the water-wave problem}

Let us consider a linear approximation of the problem. For this purpose we formally linearize equations \eqref{lapp1}-\eqref{bep1} near some stream solution $(u,d)$.  Thus, we put
\[
\hat{\Phi} = u + \epsilon \Phi + O(\epsilon^2), \ \ \eta = d + \epsilon \zeta + O(\epsilon^2).
\]
Using this ansatz in \eqref{lapp1}-\eqref{bep1}, we find
\begin{align*}
& \left[ \Phi_x - \frac{z \zeta_x u_z}{d} \right]_x + \Phi_{zz} - \frac{2 \zeta u_{zz}}{d} + \omega'(u) \Phi = O(\epsilon), \\
& \Phi(x,0) = \Phi(x,d) = 0,\\
& \Phi_z\vert_{z=d} - \left( \frac{u'(d)}{d} - \frac{1}{u'(d)}  \right) \zeta = O(\epsilon).
\end{align*}
We can simplify equations by letting
\[
\Psi = \Phi - \frac{z \zeta u_z}{d}.
\]
The latter transformation was used in \cite{En2} and \cite{En3}. The formula above implies $\zeta = -\Psi\vert_{z=d} / u'(d)$ and then the linear part of the above problem transforms into
\begin{align}
& \Psi_{xx}+\Psi_{zz} + \omega'(u) \Psi = 0 \nonumber \\
& \Psi\vert_{z=0} = 0 \label{lin} \\
& \Psi_z\vert_{z=d} - \kappa \Psi\vert_{x=d} = 0, \nonumber
\end{align}
where  $\kappa = 1/[u'(d)]^2 - \omega(1)/u'(d)$. Separation of variables in the system \eqref{lin} leads to the following Sturm-Liouville problem:
\begin{equation} \label{SLdisp}
 - \phi_{zz}  - \omega' (u)  \phi = \mu \phi \ \mbox{on} \ (0, d), \quad
\phi (0) = 0 , \ \  \, \phi_z (d) = \kappa \phi (d).
\end{equation}
The spectrum of \eqref{SLdisp} depends only on the triple $(\omega, d, u)$ and consists of a countable set of simple real eigenvalues $\{ \mu_j \}_{j=1}^{\infty}$ ordered so that $\mu_j < \mu_l$ for all $j < l$. Furthermore, only a finite number of eigenvalues may be negative.  The normalized eigenfunction corresponding to an eigenvalue $\mu_j$ will be denoted by $\phi_j$. Thus, the set of all eigenfunctions $\{\phi_j\}_{j=1}^\infty$ forms an orthonormal basis in $L^2(0,d)$.

Solving the linear problem \eqref{lin}, we find that the space of bounded and even in the $x$-variable solutions is finite-dimensional and is spanned by the functions
\[
\Psi(x,z) = \cos(\sqrt{|\mu_j|} x) \phi_j(z),
\]
where $\mu_j \leq 0$.

%The eigenvalue problem \eqref{SLdisp} arises in a several papers. In \cite{En1} authors study the inverse problem and prove that the latter problem admits any number of negative eigenvalues for some vorticity distribution and a proper stream solution. The paper \cite{En3} is devoted to the case of linear vorticity for which the linear problem has a three dimensional kernel. Multidimensional kernels for a linear vorticity have been studied in \cite{War}. On the other hand, in \cite{KK_Disp} authors consider an arbitrary vorticity distributions in \eqref{SLdisp}.

Let us turn to the problem \eqref{SLdisp} for the linear vorticity $\omega_0(p)=bp$.

\begin{proposition} \label{PropLin} For any $N \geq 1$ and $d>0$ there exists $b > 0$ satisfying \eqref{vortcond} such that the Sturm-Liouville problem \eqref{SLdisp} for the triple $(\omega_0, d, u_0)$ has exactly $N$ negative eigenvalues while all other eigenvalues are positive.
\end{proposition}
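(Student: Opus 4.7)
My approach exploits the fact that for $\omega=\omega_0(p)=bp$ the problem \eqref{SLdisp} has constant coefficients, since $\omega_0'(u_0)\equiv b$. Setting $\theta:=\sqrt{b}\,d$ and using $u_0'(d)=\sqrt{b}\cot\theta$, the Robin parameter becomes $\kappa=\tan^2\theta/b-\sqrt{b}\tan\theta$. Solving $\phi''+(\mu+b)\phi=0$ under $\phi(0)=0$ and $\phi_z(d)=\kappa\phi(d)$ reduces the eigenvalue problem to the dispersion relations
\[
\alpha\cot\alpha=\kappa d\ \ (\alpha=d\sqrt{\mu+b}>0)\qquad\text{or}\qquad\beta\coth\beta=\kappa d\ \ (\beta=d\sqrt{-(\mu+b)}>0).
\]
Since $t\coth t$ strictly increases from $1$ to $+\infty$ on $(0,\infty)$ and $\alpha\cot\alpha$ strictly decreases on each interval $((j-1)\pi,j\pi)$ from $1$ or $+\infty$ (for $j=1$ or $j\ge 2$) to $-\infty$, vanishing at $(j-\tfrac12)\pi$, I note that if $\kappa d\in(0,1)$ then there is no $\beta$-eigenvalue and exactly one $\alpha$-root $\alpha_j\in((j-1)\pi,(j-\tfrac12)\pi)$ for each $j\ge1$, with $\alpha_j\to(j-\tfrac12)\pi$ as $\kappa d\to 0^+$.

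My plan is then to choose $b=b(\delta):=(N\pi-\delta)^2/d^2$ for a small parameter $\delta>0$, so that $\theta=N\pi-\delta$ and $\tan\theta=-\tan\delta$. Substituting yields
\[
\kappa d=(N\pi-\delta)\tan\delta+\frac{d\tan^2\delta}{b(\delta)}\;\longrightarrow\;0^+\quad\text{as }\delta\to 0^+,
\]
so $\kappa d\in(0,1)$ for every sufficiently small $\delta$. The eigenvalues of \eqref{SLdisp} are then entirely of $\alpha$-type and
\[
\mu_j(\delta)=\frac{\alpha_j^2}{d^2}-b(\delta)\;\longrightarrow\;\frac{\pi^2}{d^2}\bigl[(j-\tfrac12)^2-N^2\bigr]\quad(\delta\to 0^+).
\]
These limits are strictly negative for $j=1,\dots,N$, strictly positive for $j\ge N+1$, and uniformly bounded away from $0$. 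By continuous dependence of each $\alpha_j$, and hence of each $\mu_j$, on $\delta$, the sign pattern persists for all sufficiently small $\delta>0$, producing exactly $N$ negative eigenvalues. Finally, $\sqrt{b(\delta)}\,d=N\pi-\delta$ lies in $(N\pi-\tfrac{\pi}{2},N\pi)$ for $\delta\in(0,\tfrac{\pi}{2})$ and therefore avoids the exceptional set $\{\pi j/2:j\in\N\}$, so condition \eqref{vortcond} holds automatically.

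The hard part, such as it is, will be to match the roots $\alpha_j$ with the ordering $\mu_1<\mu_2<\dots$ and to track the first branch $\alpha_1$, whose behaviour on $(0,\pi)$ differs slightly from the others because $\alpha\cot\alpha$ approaches $1$ rather than $+\infty$ at the left endpoint. The requirement $\kappa d<1$ secured above is precisely what guarantees that $\alpha_1$ exists in $(0,\pi)$ and tends to $\pi/2$ as $\delta\to 0^+$, so the uniform limit $\alpha_j\to(j-\tfrac12)\pi$ remains valid starting from $j=1$, and the counting goes through without exceptional cases.
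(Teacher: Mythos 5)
Your argument is correct, and it takes a genuinely different route from the paper's. You exploit the constant-coefficient structure of \eqref{SLdisp} for $\omega_0(p)=bp$ to write the spectrum through the explicit dispersion relations $\alpha\cot\alpha=\kappa d$ and $\beta\coth\beta=\kappa d$, choose $\sqrt{b}\,d=N\pi-\delta$ so that $\kappa d\to 0^{+}$, and count roots directly; the paper instead compares \eqref{SLdisp} with the Dirichlet problem \eqref{SLDir}, invokes an interlacing result from \cite{KK_Disp} (exactly one $\mu_{j+1}$ between consecutive negative Dirichlet eigenvalues, together with $\mu_1<\lambda_1^D$), and then excludes eigenvalues from the interval $[\lambda_{N-1}^D,0]$ by showing that $\kappa$ is small --- the same smallness of $\kappa$ that drives your computation, but reached without solving the equation explicitly. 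Your version is more elementary and self-contained (no external interlacing theorem), and it gives the limits $\mu_j\to\pi^2\bigl[(j-\tfrac12)^2-N^2\bigr]/d^2$ for free; the paper's comparison argument has the advantage of being robust under perturbations of the linear vorticity, where explicit solvability is lost. One improvement you should make explicit: the ``continuous dependence'' step must control infinitely many eigenvalues simultaneously, but your localization $\alpha_j\in\bigl((j-1)\pi,(j-\tfrac12)\pi\bigr)$ already does this uniformly, since it gives $\mu_j\le\bigl((N-\tfrac12)^2\pi^2-(N\pi-\delta)^2\bigr)/d^2<0$ for $j\le N$ whenever $\delta<\pi/2$, and $\mu_j\ge\bigl((j-1)^2\pi^2-(N\pi-\delta)^2\bigr)/d^2>0$ for $j\ge N+1$; so the sign count is exact for every admissible $\delta$, with no limiting argument needed. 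Two harmless loose ends: dispose of the borderline case $\mu=-b$ (eigenfunction $cz$, which requires $\kappa d=1$ and is excluded by $\kappa d<1$), and note that $\alpha=j\pi$ is never a root (there $\phi(d)=0$ but $\phi'(d)\neq 0$), so the intervals $\bigl((j-1)\pi,j\pi\bigr)$ exhaust the oscillatory part of the spectrum. Neither is a gap of substance.
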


In the formulation above $u_0$ stands for the unique solution of \eqref{streamsol} for the pair $(\omega_0,d)$. An analog of Proposition \ref{PropLin} was proved in \cite{War}.

\begin{proof}[Proof of Proposition \ref{PropLin}] For a given $d > 0$, we put
\[
b = \left( \frac{\pi N}{d} \right)^2 - \epsilon
\]
for some $\epsilon \neq 0$, which can be positive or negative. Note that \eqref{vortcond} is satisfied automatically for small $\epsilon$. Now we can calculate
\[
\kappa = \frac{1}{b} \frac{\sin^2 \sqrt{b}d}{\cos^2 \sqrt{b}d} - \sqrt{b} \frac{\sin \sqrt{b}d}{\cos \sqrt{b}d} = \frac{\epsilon d}{2\pi N} + O(\epsilon^2), \ \ \epsilon \to 0.
\]
For any $j>1$ the eigenfunction corresponding to $\mu_j$ is given by $C\sin{ \sqrt{b+\mu_j}z}$ and then the boundary relation at $z = d$ in \eqref{SLdisp} gives
\[
\sqrt{b+\mu_j} \cos \sqrt{b+\mu_j} d = \kappa \sin \sqrt{b+\mu_j} d,
\]
which implies that eigenvalues $\mu_j$ are uniformly separated from zero for all $j > 1$, provided $\epsilon$ is small. More precisely, there is a positive constant $\beta > 0$ such that $|\mu_j| > \beta$ for all $j>1$ and all $|\epsilon| < \beta$. It is known that eigenvalues $\mu_j$ alternates with eigenvalues $\lambda_j^D$ of the Dirichlet problem:
\[
 - \phi_{zz}  - b \phi = \lambda^D \phi \ \mbox{on} \ (0, d), \quad
\phi (0) = \phi(d) = 0.
\]
More precisely, we have $\mu_1 < \lambda_1^D$ and for any two neighbour eigenvalues $\lambda_i^D < \lambda_{i+1}^D$ there is exactly one eigenvalue $\mu_j \in (\lambda_i^D,\lambda_{i+1}^D)$. Finally, the choice of the constant $b$ implies $\lambda_N^D = \epsilon$. Thus, there are exactly $N-1$ to the left of $\lambda_{N-1}^D$ and $\mu_N < \epsilon$. Furthermore, we necessary have $|\mu_N| < \beta$, which implies that $\mu_N < -\beta$ and $\mu_{N+1} > \beta$, provided $|\epsilon|< \beta$. Thus all eigenvalues $\mu_1,...,\mu_N$ are negative, while all others are positive. This finishes the proof.
\end{proof}

Note that it follows from the proof of the proposition that the constant $\kappa$ in \eqref{SLdisp} has the same sign as $\epsilon$ and then can be both positive or negative. The sign of $\kappa$ plays no role in our proof and one can show that multi-modal waves appear for both positive and negative values of $\kappa$.

\section{Inverse spectral problem}

For a given $N > 1$ and $d>0$, we consider the constant $b$ provided by Proposition \ref{PropLin} and let $(u_0,d)$ be the stream solution \eqref{streamsol}, which corresponds to the vorticity function $\omega_0(p) = bp$ and the depth $d$. According to the Proposition \ref{PropLin}, the Sturm-Liouville problem \eqref{SLdisp} for the triple $(\omega_0, d, u_0)$ has exactly $N$ negative eigenvalues $\lambda_1<...<\lambda_N < 0$ and all other eigenvalues are positive.

Now, let the functions $\omega_k$, $k=1,\ldots,N$, in (\ref{vort}) be fixed. Then for small $\delta=(\delta_1,\ldots,\delta_N)$ there exists a unique solution $u_\delta$ to the problem (\ref{streamsol}) with $\omega=\omega_\delta$ given by (\ref{vort}) and we can consider eigenvalue problem \eqref{SLdisp} for the triple $(\omega_\delta, d, u_\delta)$. Let $\mu_1<\ldots<\mu_N$ be all negative eigenvalues of that Sturm-Liouville problem, while $\phi_j$, $j>N$ are eigenfunctions corresponding to negative eigenvalues respectively. Note that both eigenvalues $
\mu_j \in \RR$ and the corresponding eigenfunctions $\phi_j \in L^2(0,d)$ depend analytically on $\delta$ (for details see Chapter VII in \cite{Kato} and examples 1.15 and 2.12 therein). Let us define the map
\begin{equation} \label{K1}
T:\delta=(\delta_1,\ldots,\delta_N)\rightarrow\mu=(\mu_1,\ldots,\mu_N).
\end{equation}
which is analytic in a neighborhood of $\delta=0$. The next theorem provides the invertibility of $T$ and by this we solve the spectral inverse problem locally: for a given set of eigenvalues located near $\lambda_1,...,\lambda_N$, we find a corresponding potential through the vorticity distribution $\omega_\delta$ and the stream solution $(u_{\delta},d)$.
\begin{theorem} \label{th:ISP} There exist functions $\omega_k\in C^\infty_0(0,1)$, $k=1,\ldots,N$, such that the Jacobian matrix of the map {\rm (\ref{K1})}
is invertible at $\delta=0$.
\end{theorem}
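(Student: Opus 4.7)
The plan is to compute the Jacobian explicitly by first-order eigenvalue perturbation theory, reduce the claim to surjectivity of a single linear map $\mathcal L:C^\infty_0(0,1)\to\mathbb R^N$, and then use the explicit trigonometric structure of the unperturbed eigenfunctions to prove that surjectivity.

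Differentiating the Sturm--Liouville problem \eqref{SLdisp} in $\delta_k$ at $\delta=0$, pairing with $\phi_i$, integrating by parts using the Robin condition at $z=d$, and using $\|\phi_i\|_{L^2(0,d)}=1$, one obtains
\begin{equation*}
\frac{\partial\mu_i}{\partial\delta_k}\bigg|_{\delta=0} = -\int_0^d\omega_k'(u_0(z))\,\phi_i^2(z)\,dz - c_0\,\phi_i(d)^2\,w_k'(d),
\end{equation*}
where $c_0 = (b u_0'(d)-2)/[u_0'(d)]^3$ and $w_k$ solves $w_k''+bw_k=-\omega_k(u_0)$, $w_k(0)=w_k(d)=0$. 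Two facts are essential: the perturbation of $u_\delta$ does not feed back into the bulk potential at $\delta=0$, because $\omega_0'\equiv b$ is constant; and $\omega_k(1)=0$ because $\omega_k\in C^\infty_0(0,1)$, so $\omega_\delta(1)=b$ for all $\delta$ and $\partial_{\delta_k}\kappa$ becomes a clean linear functional $c_0 w_k'(d)$ of $\omega_k$. Each Jacobian entry depends linearly on $\omega_k$, so the $k$-th column equals $\mathcal L(\omega_k)$ for a single linear map $\mathcal L:C^\infty_0(0,1)\to\mathbb R^N$. If $\mathcal L$ is surjective, one picks $\omega_1,\ldots,\omega_N$ whose images are the standard basis of $\mathbb R^N$, making the Jacobian equal to $-I_N$.

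To prove surjectivity, I would argue by contradiction: suppose $c=(c_i)\ne 0$ annihilates the image. Set $F(z):=\sum_i c_i\phi_i^2(z)$, $D:=c_0\sum_i c_i\phi_i^2(d)$, and represent $w'(d)$ as a linear functional of $\omega$ via the Dirichlet Green's function of $-\partial_z^2-b$ on $(0,d)$. Pushing forward under $p=u_0(z)$ over the finitely many monotone branches of $u_0$, the condition $\sum_i c_i\mathcal L_i(\omega)=0$ for all $\omega\in C^\infty_0(0,1)$ becomes a distributional equation
\begin{equation*}
\widetilde F'(p)+D\,\widetilde K(p)=0\quad\text{on }(0,1)\setminus\{\text{critical values of }u_0\},
\end{equation*}
where $\widetilde F$ and $\widetilde K$ are the pushforwards of $F\,dz$ and of the Green's-function kernel $G_z(d,\cdot)\,ds$, each a finite sum over monotone branches. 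Near every critical value $p_*=u_0(z_*)$ the function $\widetilde F$ has a non-integrable $1/\sqrt{|p-p_*|}$ singularity proportional to $F(z_*)$, whereas the right-hand side generates only integrable singularities; matching the two sides forces $F(z_*)=0$ at each interior critical point $z_*$ of $u_0$. Since the unperturbed eigenfunctions have the explicit form $\phi_i(z)=a_i\sin(\alpha_i z)$ or $a_i\sinh(\beta_i z)$ with pairwise distinct spectral parameters, the squares $\phi_1^2,\ldots,\phi_N^2$ are linearly independent combinations of $1,\cos(2\alpha_iz),\cosh(2\beta_iz)$; this independence, together with the vanishing conditions at the critical points and additional information obtained by integrating the distributional equation along branches and at the endpoints, forces $c=0$, the desired contradiction.

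The main obstacle lies in this last step. The map $u_0$ has of order $N$ monotone branches in $(0,d)$ (see the proof of Proposition~\ref{PropLin}), so pushing forward involves delicate branch bookkeeping and produces singular behaviour at the critical values; moreover the boundary term $c_0\phi_i^2(d)w'(d)$ is nonlocal in $\omega$ and must be carried through the pushforward via the Green's function. Extracting enough vanishing conditions on $F$ from the resulting distributional equation to pin down $c$ is the delicate point of the argument, and it is here that the explicit trigonometric structure of the unperturbed eigenfunctions, enabled by the linear choice $\omega_0(p)=bp$, is indispensable.
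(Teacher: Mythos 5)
Your first-order perturbation formula is correct and agrees with the paper's computation: the Jacobian entry is
\[
\frac{\partial\mu_l}{\partial\delta_j}\Big|_{\delta=0}=\frac{u_j'(d)}{k_0^2}\Big(\frac{2}{k_0}-b\Big)\phi_l^2(d)-\int_0^d\omega_j'(u_0)\,\phi_l^2\,dz ,
\]
and your observations that $\omega_0''\equiv 0$ kills the feedback of $u_\delta$ into the bulk potential and that $\omega_k(1)=0$ cleans up $\partial_{\delta_k}\kappa$ are both right. The reduction of the theorem to surjectivity of a single linear map $\mathcal L:C^\infty_0(0,1)\to\RR^N$ is also exactly the right strategy; it is what the paper does when it chooses $\omega_j$ so that the matrix becomes the identity.

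However, the proof of that surjectivity is the entire content of the theorem, and you have not supplied it: you yourself flag the final step (extracting enough vanishing conditions on $F$ from the distributional pushforward equation to force $c=0$) as the ``delicate point,'' and the singularity-matching argument at the critical values of $u_0$ is only a plan, not a proof. The idea you are missing is a simple integration by parts that removes the nonlocality you are fighting. Since $u_0=\sin(\sqrt b z)/\sin(\sqrt b d)$, one has
\[
u_j'(d)=-\frac{1}{\sin(\sqrt b d)}\int_0^d\omega_j(u_0)\sin(\sqrt b z)\,dz=-\int_0^d\omega_j'(u_0)\,\frac{\cos^2(\sqrt b z)}{\sin^2(\sqrt b d)}\,dz ,
\]
so the boundary term is not a genuinely nonlocal functional of $\omega_j$ (no Green's function is needed): every Jacobian entry collapses to a single local integral $\int_0^d\omega_j'(u_0)\,h_l(z)\,dz$ with $h_l(z)=A_l\cos^2(\sqrt b z)-\phi_l^2(z)$ explicit. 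Folding the integral onto one monotone branch $[0,d_*]$ of $u_0$ by summing over the finitely many preimages then expresses each entry as $\int_0^{d_*}\alpha_j(u_0(z))f_l(z)\,dz$ with $\alpha_j=\omega_j'$ and $f_l$ an explicit finite combination of $1$, $\cos(2\sqrt b z)$, $\cos(2\sqrt{b+\lambda_l}z)$ and $\cos(2\sqrt{b+\lambda_l}(z+\Lambda/4))$; surjectivity is then immediate from linear independence of these finitely many trigonometric functions (together with $\cos(\sqrt b z)$, whose orthogonality condition enforces $\omega_j(1)=0$ so that $\omega_j\in C^\infty_0(0,1)$ --- a constraint your argument should also track explicitly). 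Without this localization your distributional approach must control the pushforward of the Green's kernel across all branches and all critical values, and it is not clear it closes; as written, the argument has a genuine gap at its decisive step.
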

\begin{proof} Let us  calculate the Jacobian matrix of the map $T$. Since $u_\delta$ depends analytically on $\delta$ (as an operator function with values in $C^{2,\alpha}([0,d])$) as it was noted in Subsection 2.1, we have
\[
u_\delta(z) = u_0(z) + \delta_1 u_1(z) + ... + \delta_N u_N(z) + O(|\delta|^2), \ \ |\delta| \to 0,
\]
where $u_1(0) = u_1(d) = ... = u_N(0) = u_N(d) = 0$. Using this ansatz, we find that every $u_j$ is subject to
\[
u_j'' + b u_j = -\omega_j(u_0), \ \ u_j(0) = u_j(d) = 0
\]
for all $1 \leq j \leq N$. Solving these equations, we find
\begin{equation} \label{u_j}
u_j(z) = c_j \frac{\sin \sqrt b z}{\sqrt b } - \frac 1 {\sqrt b} \int_0^z \omega_j(u_0(p)) \sin \sqrt b (z-p) dp,
\end{equation}
where
\[
c_j = \frac 1 { \sin (\sqrt b d)} \int_0^d \omega_j(u_0(p)) \sin \sqrt b (d-p) dp.
\]
Let us turn to the dispersion equation for the triple ($\omega_\delta, d, u_\delta$) which is given by the following eigenvalue problem
\begin{align*}
- & \phi'' - \omega'(u_\delta) \phi = \mu \phi \ \ \text{on} \ \ (0,d); \\
& \phi'(d) = \left( \frac 1 { k_\delta^2 } - \frac {\omega_\delta(1)}{k_\delta} \right) \phi(d), \ \ \ \phi(0) = 0.
\end{align*}
Here $k = u_\delta'(d)$. The corresponding eigenfunctions $\phi_l$ and eigenvalues $\mu_l$, $l = 1,...,N$ depend smoothly on $\delta$, so that
\[
\mu_l = \lambda_l + \delta_1 \mu_{l1} + ... \delta_N \mu_{lN} + O(|\delta|^2), \ \ |\delta| \to 0, \ \ l = 1,...,N.
\]
In order to find the coefficients $\mu_{lj}$, we write the eigenvalue problem above in operator form:
\[
A \phi = \mu \phi,
\]
where the self-adjoint operator $A$ is defined by
\[
\langle A \phi, \psi \rangle = -\left( \frac 1 { k_\delta^2 } - \frac {\omega_\delta(1)}{k_\delta} \right) \psi(d) \phi (d) + \int_0^d \phi' \psi' dz - \int_0^d \omega_\delta'(u) \phi \psi dz,
\]
where $\phi, \psi \in W^{1,2}_0(0,d)$ and the last space consists of functions from $W^{1,2}(0,d)$ vanishing at $0$. Here $\langle \cdot, \cdot \rangle$ stands for the standard inner product in $L^2(0,d)$. Thus, if
\[
A = A_0 + \delta_1 A_1 + ... + \delta_N A_N + O(|\bar \delta|^2), \ \ |\bar \delta| \to 0,
\]
where operators $A_j$ are defined by
\[
\langle A_j \phi, \psi \rangle =  \frac{u_j'(d)}{k_0^2} \left( \frac 2 {k_0} - b \right)   \phi(d) \psi(d) - \int_0^d \omega_j'(u_0) \phi \psi dz
\]
for $\phi, \psi \in W^{1,2}_0(0,d)$. Using \eqref{u_j}, we find
\begin{equation} \label{u_j'}
u_j'(d) = -\frac{1}{\sin(\sqrt{b}d)} \int_0^d \omega_j(u_0) \sin\sqrt{b}z dz=-\int_0^d\omega_j'(u_0)\frac{\cos^2{\sqrt{b}z}}{\sin^2{\sqrt{b}d}}dz
\end{equation}
after integration by parts and observation that $u_0=\sin{\sqrt{b}z}/\sin{\sqrt{b}d}$.
Then, after some  algebra one obtains
\[
\mu_{lj} =  \langle A_j \phi_l, \phi_l \rangle,
\]
where $\phi_l$ stands for the normalized eigenfunction corresponding to the eigenvalue $\lambda_l$ of the Sturm-Liouville problem for $\delta=0$. Therefore, using (\ref{u_j'}), we conclude
\begin{equation} \label{mulj}
\mu_{lj}  =  \int_0^d \omega_j'(u_0)(A_l\cos^2{\sqrt{b}z}-\phi_l^2) dz,\;\;A_l=\frac{\phi_l^2(d)}{k_0^2\sin^2{\sqrt{b}d}}\Big(b-\frac{2}{k_0}\Big).
\end{equation}
%\begin{equation} \label{muljaaa}
%\mu_{lj}  = \left[ \frac{u_j'(d)}{k^2} \left( \frac 2 {k} - b \right) + \frac{\omega_j(1)}{k} \right] \phi_l(d) \phi_l(d) - \int_0^d \omega_j'(u_0) \phi_l^2 %dz.
%\end{equation}

Let us transform the right-hand side in \eqref{mulj}. Since $u_0=\sin\sqrt{b}z/\sin\sqrt{b}d$, we can reduce integration over $[0,d]$ to a smaller interval of where the function $u_0$ is monotone. We put
\[
\Lambda = \frac{2\pi}{\sqrt{b}}
\]
which is the period of the function $u_0$.

Consider first the case $\sin\sqrt{b}d>0$. Let  $d_*$ be the smallest positive root of the equation $u_0(d_*)=1$. Then the function $u_0$ is monotone on the interval $[0,d_*]$. Let $z\in (0,d_*)$. To describe all  roots of $u_0(\hat{z})=u_0(z)$ on the interval $(0,d)$, which are different from $z$, we introduce the integer $M$ as the largest integer satisfying
\begin{equation}\label{K2a}
\Lambda\Big(M+\frac{3}{4}\Big)<d.
\end{equation}
Then the roots of $u_0(\hat{z})=u_0(z)$   are given by
\begin{equation}\label{K2b}
z_k^\pm=\Big(k+\frac{3}{4}\Big)\Lambda\pm\Big(z+\frac{\Lambda}{4}\Big),\;\;k=1,\ldots,M.
\end{equation}
Now we can write (\ref{mulj}) as
\begin{equation}\label{K2}
\mu_{lj}  =  \int_0^{d_*} \omega_j'(u_0)\left[(2M+1)A_l\cos^2{\sqrt{b}z}-\phi_l^2(z)-\sum_{k=1}^M(\phi_l^2(z_k^+)+\phi_l^2(z_k^-)) \right] dz.
\end{equation}

Note that the function $\phi_l$ solves the problem
\[
-\phi_l'' - b \phi_l = \lambda_l \phi_l, \ \ \phi_l(0) = 0, \ \ \phi_l'(d) = \kappa_0 \phi_l(d).
\]
Hence, depending on the sign of $b+\lambda_l$, it is either $C_l \sin{ \sqrt{b + \lambda_l} z }$ or $C_2 \sinh{ \sqrt{b + \lambda_l} z }$. Let us consider the case when $b+\lambda_1 > 0$ and hence
\[
\phi_l(z) = C_l \sin{ \sqrt{b + \lambda_l} z }\;\;\mbox{for all $l=1,\ldots,N$}.
\]
Then
\begin{eqnarray*}
&&\phi_l^2(z)+\sum_{k=1}^M(\phi_l^2(z_k^+)+\phi_l^2(z_k^-))\\
&&=C_l^2\left((M+1/2)-\cos (2\sqrt{b+\lambda_l}z)-B_l\cos(2\sqrt{b+\lambda_l}(z+\Lambda/4))\right),
\end{eqnarray*}
where
$$
B_l=\sum_{k=1}^M\cos\big(2\sqrt{b+\lambda_l}(k+3/4)\Lambda\big).
$$
Therefore
\begin{equation}\label{K3}
\mu_{lj}  =  \int_0^{d_*} \omega_j'(u_0)f_l(z)dz,
\end{equation}
where
\begin{eqnarray}\label{K4}
&&f_l(z)=(M+1/2)\Big(A_l-C_l^2+A_l\cos(2\sqrt{b}z)\Big)\\
&&+C_l^2\left(\cos (2\sqrt{b+\lambda_l}z)+B_l\cos(2\sqrt{b+\lambda_l}(z+\Lambda/4)\right).\nonumber
\end{eqnarray}
%$$
%f_l(z)=(2M+1)A_l\cos^2{\sqrt{b}z}-C_l^2\left((M+1/2)-\cos (2\sqrt{v+\lambda_l}z)-B_l\cos(2\sqrt{b+\lambda_l}(z+\Lambda/4)\right).
%$$
Since the functions
$$
1,\;\cos(2\sqrt{b}z),\;\cos(2\sqrt{b+\lambda_l}z),\;\cos(2\sqrt{b+\lambda_l}(z+\Lambda/4)),\;l=1,\ldots,N,
$$
are linear independent, the set of functions
\begin{equation}\label{K5}
\cos(\sqrt{b}z),\;\;f_l(z),\;l=1,\ldots,N,
\end{equation}
is also linear independent. Indeed, the only case when they can be dependent is $\sqrt{b}=2\sqrt{b+\lambda_l}$ for certain $l$, but then either $A_l$ or $A_l - C_l^2$ is non-zero in the representation (\ref{K4}) for $f_l$ which is sufficient for linear independence of the system (\ref{K5}).

In the case $b+\lambda_1\leq 0$ we have $b+\lambda_2>0$. Otherwise we will have two eigenfunctions corresponding to different eigenvalues and which are no orthogonal to each other. Now we must replace the function $\sin\sqrt{b+\lambda_1}$ by $\sinh\sqrt{b+\lambda_1}$ and, repeating then the above argument, we obtain the linear independence of the system (\ref{K5}).

Using linear independence of the functions (\ref{K5}), we can find functions $\alpha_j=\alpha_j(p)$ in $C_0^\infty(0,1)$ such that
\begin{equation}\label{K6}
\int_0^{d_*}\alpha_j(u_0(z))f_l(z)dz=\delta_l^j\;\;\mbox{and}\;\;\int_0^{d_*}\alpha_j(u_0(z))\cos(\sqrt{b}z)dz=0,
\end{equation}
where $\delta_l^j$ is the Kronecker delta. If we put
$$
\omega_j(p)=\int_0^p\alpha_j(s)ds=\int_0^Z\alpha_j(u_0(z))u'_0(z)dz,\;\;p=u_0(Z),
$$
then we see that $\omega_j\in C_0^\infty(0,1)$ due to the second equality in (\ref{K6}) and $\omega_j'(p)=\alpha_j(p)$. Therefore the matrix
(\ref{K3}) is invertible by the first equality in (\ref{K6}). This completes the proof in the case $\sin\sqrt{b}d>0$.

Assume now that $\sin\sqrt{b}d<0$. Then the function $u_0$ is negative on $(0,\Lambda/2)$ and it is monotonically increasing from $0$ to $1$ on the interval
$[\Lambda/2,d_*+\Lambda/2]$. Reasoning as above we obtain the representation (\ref{K3}) but on the interval $(\Lambda/2,d_*+\Lambda/2)$ with $3/4$ replaced by $1/4$ in formulas like (\ref{K2a}) and (\ref{K2b}). The remaining part of the proof in this case is the same as above.

\end{proof}

A similar inverse problem was considered in \cite{En1}, where the authors used inverse spectrum Sturm-Liouville theory (see \cite{Dl} for details). Unfortunately we could not implement a similar argument based on the inverse Sturm-Liouville theory and so we used another approach based on a perturbation argument.

\section{Existence of $N$-modal waves}

For a given $N>1$ and $d>0$ let $b$ be the constant from Proposition \ref{PropLin} so that the Sturm--Liouville problem (\ref{SLdisp}) has $N$ negative eigenvalues $\lambda_1,\ldots\lambda_N$ and all remaining eigenvalues are positive.
By Theorem \ref{th:ISP} we can choose real-valued functions $\omega_1,\ldots,\omega_N\in C_0^\infty(0,1)$  such that the map (\ref{K1}) is invertible in
 a certain neighborhood $\Gamma\subset\Bbb R^N$ of the point $(\lambda_1,\ldots,\lambda_N)$. Furthermore, we assume that for every $(\mu_1,...,\mu_N) \in \Gamma$ we have $\mu_j<0$, $j=1,...,N.$ In what follows we will use $\mu=(\mu_1,\ldots,\mu_N)\in\Gamma$
as a parameter of our water wave problem and denote by $\omega(p;\mu), u(z;\mu)$ the perturbed vorticity $\omega_{\delta(\mu)}$ and stream solution $u_{\delta(\mu)}$. We choose also $\mu^*=(\mu_1^*,\ldots,\mu_N^*)\in\Gamma$ so that $\mu_j^*/\mu_l^* $ are rational numbers for all $1\leq j,l \leq N$. In this case all solutions to the linear problem \eqref{lin} for the triple $(\omega(p; \mu_*), d, u(z; \mu_*))$ have a common period which we denote by $\Lambda_*$. Since the set $\Gamma$ is open, we can always find some $\mu^*$ with this property.

The aim of the present paper is to give an affirmative answer to the question raised in \cite{En2}: does higher-order bifurcation occur so that $N$-modal waves exist for all $N \geq 1$?

\begin{theorem} \label{Th1} For any integer $N\geq 1$ and $d>0$ there exist constants $\beta,\epsilon > 0$, a linear vorticity distribution $\omega_0(p) = bp$ with $b>0$ and a smooth function $\mu:[-\beta,\beta]^N \to \Gamma$ with the following property: for any $t:=(t_1,...,t_N) \in \RR^N$ such that $|t| < \beta$ and $|t|^2/|t_j| \leq \epsilon$ the nonlinear water wave problem \eqref{lapp1}-\eqref{bep1} with the vorticity $\omega(p; \mu(t))$ possesses a unique $\Lambda_*$-periodic and even solution $(\Psi, \eta)$ corresponding to the Bernoulli constant $r(t) = [u_z(d; \mu(t))^2 + 2d]/3$ such that
\[
\eta(x) = d + t_1 \cos(\sqrt{|\mu_1^*|}x) + ... + t_N \cos(\sqrt{|\mu_N^*|}x) + O(|t|^2).
\]
The constants $\beta$ and $\delta$ depend only on $N, d, \mu_*$ and $\Lambda_*$.
\end{theorem}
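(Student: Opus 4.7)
The plan is to recast \eqref{lapp1}-\eqref{bep1} as a nonlinear operator equation $\mathcal{F}(\Psi, \zeta; \mu) = 0$ on the Hölder space of $\Lambda_*$-periodic, even-in-$x$ functions, using the substitution $\hat\Phi = u(\cdot;\mu) + \Psi + \tfrac{z\zeta u_z}{d}$ and $\eta = d + \zeta$ already prepared in Section 2.2. This is normalized so that $\mathcal{F}(0,0;\mu) \equiv 0$ for every $\mu \in \Gamma$, since the stream solution is a solution for the Bernoulli constant attached to $\mu$. The linearization $D\mathcal{F}(0,0;\mu^*)$ is precisely the system \eqref{lin}, and the rationality of the ratios $\mu_j^*/\mu_l^*$ makes its kernel on the space of $\Lambda_*$-periodic even functions exactly $N$-dimensional, spanned by $e_j(x,z) := \cos(\sqrt{|\mu_j^*|}\,x)\phi_j(z)$, $j=1,\ldots,N$. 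The remaining Fourier harmonics $\cos(2\pi k x/\Lambda_*)$ with $k\notin\{k_j\}$ give Sturm-Liouville operators boundedly invertible uniformly in $\mu$ near $\mu^*$, because their spectral parameter $-(2\pi k/\Lambda_*)^2$ stays bounded away from the eigenvalues $\mu_1,\ldots,\mu_N$ in $\Gamma$.

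I would then perform the two-step reduction foreshadowed in Sects.~4.4-4.5. First, expand in the Fourier basis and invert the non-resonant harmonics by the implicit function theorem; this leaves a coupled $N$-component ODE system for the resonant coefficients $a_{k_j}(z)$ whose linear part is diagonal. Second, apply Lyapunov-Schmidt: write $a_{k_j}(z) = t_j\phi_j(z) + a_{k_j}^\perp(z)$ with $a_{k_j}^\perp$ in the complement of the kernel of the $k_j$-th shifted Sturm-Liouville operator, and solve for $a_{k_j}^\perp$ and the induced correction to $\zeta$ as functions of $(t,\mu)$ of size $O(|t|^2)$. The outcome is a bifurcation system
\[
G_j(t,\mu) = 0, \qquad j=1,\ldots,N.
\]
Orthogonality of distinct Fourier harmonics makes the off-diagonal linear-in-$t$ terms vanish, so the reduced equations expand as
\[
G_j(t,\mu) = \gamma_j(\mu)(\mu_j - \mu_j^*)\,t_j + R_j(t,\mu), \qquad R_j(t,\mu) = O(|t|^3),
\]
with $\gamma_j(\mu^*) \neq 0$: the derivative of the dispersion condition for the $j$-th mode in $\mu_j$ is nonzero exactly because the Jacobian of the map $T$ is invertible by Theorem \ref{th:ISP}, which is what made $\mu$ a legitimate coordinate on $\Gamma$ in the first place.

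To extract $\mu$ as a function of $t$, divide the $j$-th equation by $t_j$, obtaining
\[
\gamma_j(\mu)(\mu_j - \mu_j^*) = -R_j(t,\mu)/t_j, \qquad j = 1,\ldots,N,
\]
whose right-hand side is of size $O(|t|^2/|t_j|)$. Under the cone constraint $|t|^2/t_j \leq \epsilon$ this right-hand side is uniformly small and Lipschitz in $\mu$, so the contraction mapping principle produces a unique $\mu(t)\in\Gamma$ solving the system and hence a genuine $\Lambda_*$-periodic even wave with the displayed leading order and Bernoulli constant $r(t) = ([u_z(d;\mu(t))]^2 + 2d)/3$. The main obstacle, and the genuinely new ingredient of the argument, is exactly this last division: because $G_j$ is degenerate along every coordinate axis $\{t_j = 0\}$, the classical implicit function theorem fails at $t=0$, and the cone condition $|t|^2 \leq \epsilon|t_j|$ is precisely what trades that degeneracy for a well-posed fixed-point problem for $\mu$. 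The diagonal structure of the linear part—which is the payoff of first reducing to the finite system before performing Lyapunov-Schmidt—is essential here, since otherwise coupling among the $t_j$'s would obstruct the term-by-term division.
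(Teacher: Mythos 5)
Your overall strategy is sound and lands on exactly the same endgame as the paper: a diagonal bifurcation system $t_j(\mu_j-\mu_j^*)=O(|t|^2)$, division by $t_j$, and a fixed-point argument for $\mu(t)$ made possible by the cone condition $|t|^2/t_j\le\epsilon$, with Theorem \ref{th:ISP} justifying the use of $\mu$ as the parameter. The route to that system, however, is transposed relative to the paper's. You expand in Fourier modes in $x$ first, invert the non-resonant harmonics, and are left with $N$ Sturm--Liouville problems in $z$ on which you perform Lyapunov--Schmidt. The paper instead projects first onto the vertical eigenfunctions $\phi_j(z)$ of \eqref{SLdisp}, which kills the infinite-dimensional part in one stroke (Lemma \ref{iso} and Theorem \ref{Thred}) and leaves $N$ ordinary differential equations in $x$, $\Phi_j''-\mu_j\Phi_j=N_j$, to which Lyapunov--Schmidt in the $x$-variable is applied. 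Both decompositions produce the same diagonal reduced system, and your observation that the diagonality is what permits the term-by-term division by $t_j$ is exactly the point the authors emphasize.

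Two places in your sketch need more care. First, the inversion of the non-resonant block: spectral separation of $-(2\pi k/\Lambda_*)^2$ from the eigenvalues gives mode-by-mode invertibility in $L^2$, but the implicit function theorem must be run in $C^{2,\alpha}$, and Fourier truncations do not behave well in H\"older norms; you need a global argument on the complement of the kernel combining an energy estimate (using $\mu_{N+1}>0$) with Schauder estimates, which is precisely what the paper's Lemma \ref{iso} supplies. Second, two harmless inaccuracies: the remainder $R_j$ is only $O(|t|^2)$ in general (quadratic interactions of the modes need not vanish when $\sqrt{|\mu_l^*|}\pm\sqrt{|\mu_m^*|}=\sqrt{|\mu_j^*|}$), though your subsequent bound $O(|t|^2/|t_j|)$ is the correct one and suffices; and the coefficient $\gamma_j$ is identically $-1$ by direct computation, so Theorem \ref{th:ISP} is not needed for its nonvanishing --- its role is solely to guarantee that every $\mu$ near $\mu^*$ is realized by an admissible vorticity perturbation.
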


The waves constructed in Theorem \ref{Th1} are symmetric with respect to the vertical line $x=0$ and this is essential for the proof of the theorem. The only existence result for non-symmetric two-dimensional waves in the absence of surface tension is \cite{KLsym}, while there are several numerical results: \cite{Z1, Z2, WV}. There are also some results on two-dimensional non-symmetric gravity-capillary steady water waves (see \cite{BGT}).

\subsection{Functional-analytic setup}

In order to write equations \eqref{lapp1}-\eqref{bep1} in an operator form, we define nonlinear operators
\begin{align*}
 \hat{\cal F}_1 (\hat{\Phi},\eta; \mu) = & \left[ \hat{\Phi}_x - \frac{z \eta_x}{\eta} \hat{\Phi}_z \right]_{x} - \frac{z \eta_x}{\eta} \left[ \hat{\Phi}_x - \frac{z \eta_x}{\eta} \hat{\Phi}_z \right]_z + \\
 & \left( \frac{d}{\eta} \right)^2 \hat{\Phi}_{zz} + \omega(\hat{\Phi}; \mu), \\
\hat{\cal F}_2 (\hat{\Phi},\eta; \mu) = & {\hat{\Phi}_z^2 \vert}_{z = d} - \frac{\eta^2}{d^2} \left( \frac{3r(\mu) - 2 \eta}{1 + \eta_x^2} \right),
\end{align*}
where
\[
r(\mu) = [u'(d;\mu)]^2 - 2 d.
\]
The definitions above imply that
\[
\hat{\cal F}_1(u(\cdot; \mu),d; \mu) = \hat{\cal F}_2(u(\cdot; \mu),d; \mu) = 0
\]
for all $\mu \in \Gamma$.
Next, we put
\[
\Phi(x,z) = \hat{\Phi}(x,z) - u(z; \mu) - \frac{z u_z(z; \mu) (\eta(x) - d)}{d}.
\]
The purposes of this change of variables are twofold. First, it gives a formal linearization near the stream solution $u(z; \mu)$. Second, it allows to eliminate the profile $\eta$ from the equations. Indeed, the definition above implies that
\begin{equation} \label{eta}
\eta(x) = d - \frac{{\Phi \vert}_{z=d}}{u'(d)}.
\end{equation}
Thus, we naturally define
\[
{\cal F}_j(\Phi; \mu) = \hat{\cal F}_j \left(\Phi + u + \frac{zu_z(\eta-d)}{d}, d - \frac{{\Phi \vert}_{z=d}}{u_z(d)}; \mu\right), \ \ j = 1,2.
\]
Therefore, the problem \eqref{lapp1} - \eqref{bep1} with the vorticity $\omega(\cdot; \mu)$ and Bernoulli constant $r(\mu)$ reads as
\begin{equation} \label{Op}
{\cal F}(\Phi; \mu) = 0,
\end{equation}
where ${\cal F} = ({\cal F}_1,{\cal F}_2):X \times \Gamma \to Y:= Y_1 \times Y_2$ and the spaces are defined by
\[
\begin{split}
X = \{\Phi \in C^{2,\alpha}_{per}(\bar{S}):\ \Phi(x,0) = 0, \ \Phi(x,z) = \Phi(-x,z) \ \ \text{for all} \ \ (x,y) \in S \}
\end{split}
\]
and
\[
Y_1 = C^{0, \alpha}_{per}(\bar{S}), \ \ Y_2 = C^{1, \alpha}_{per}(\RR).
\]
Here and elsewhere the subscript $per$ denotes $\Lambda_*$-periodicity and evenness in the horizontal $x$-variable. As the norms in these spaces we will use $\|\cdot\|_{C^{0,\alpha}([-\Lambda_*/2,\Lambda_*/2] \times [0,d])}$ and $\|\cdot\|_{C^{1,\alpha}([-\Lambda_*/2,\Lambda_*/2])}$ respectively.
%Our goal is to prove the existence of a bifurcation sheet of solutions for $\mu$ near $\mu_*$.

\begin{proposition} The Fr\'echet derivative $D_{\Phi}{\cal F}$ at $\Phi = 0$ is given by
\begin{align}
& [D_{\Phi}{\cal F}_1(0,\mu)](\Phi) = \Phi_{xx} + \Phi_{zz} + \omega'(u)\Phi \label{lin1} \\
& [D_{\Phi}{\cal F}_2(0,\mu)](\Phi) = \Phi_z\vert_{z=d} - \kappa \Phi \vert_{z=d}, \label{lin2}
\end{align}
where
\[
\kappa = \frac{1}{[u_z(d; \mu)]^2} - \frac{\omega(1;\mu)}{u_z(d,\mu)}.
\]
\end{proposition}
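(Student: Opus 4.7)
The plan is to reduce the computation to the linearization of the unreduced operators $\hat{\mathcal F}_j$ at the stream solution $(\hat{\Phi},\eta)=(u(\cdot;\mu),d)$, since the change of variables defining $\mathcal F$ in terms of $\Phi$ is affine. Indeed, with
\[
h := \hat{\Phi}-u = \Phi + \frac{zu_z\,\zeta}{d},\qquad \zeta := \eta-d = -\frac{\Phi|_{z=d}}{u_z(d)},
\]
the chain rule yields
\[
[D_{\Phi}{\mathcal F}_j(0;\mu)](\Phi)=[D_{\hat{\Phi}}\hat{\mathcal F}_j(u,d;\mu)](h)+[D_{\eta}\hat{\mathcal F}_j(u,d;\mu)](\zeta),
\]
where $h$ and $\zeta$ are the above linear functions of $\Phi$. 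So I only need to carry out the two linearizations (already essentially done in Sect.~2.2) and then resubstitute.

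For $\hat{\mathcal F}_1$, the ansatz $\hat{\Phi}=u+\epsilon h$, $\eta=d+\epsilon\zeta$ gives, at first order,
\[
h_{xx}+h_{zz}+\omega'(u)h-\frac{zu_z}{d}\zeta_{xx}-\frac{2u_{zz}}{d}\zeta,
\]
because the $(z\eta_x/\eta)$-terms are quadratic in the perturbation (they contain $\eta_x$, which vanishes at the stream solution) and the expansion $(d/\eta)^2\approx 1-2\zeta/d$ produces the last term after multiplying by $u_{zz}+h_{zz}$. Substituting $h=\Phi+zu_z\zeta/d$ (with $\zeta$ depending only on $x$) and using the identity $u_{zzz}=-\omega'(u)u_z$, obtained by differentiating the stream equation $u_{zz}+\omega(u)=0$, every $\zeta$-contribution cancels telescopically, leaving precisely $\Phi_{xx}+\Phi_{zz}+\omega'(u)\Phi$. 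This verifies \eqref{lin1}.

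For $\hat{\mathcal F}_2$, the zeroth-order term vanishes thanks to $3r(\mu)=u_z(d;\mu)^2+2d$, and the first-order part reduces, up to a non-vanishing factor of $2u_z(d)$, to
\[
h_z\big|_{z=d}+\Big(\frac{1}{d}-\frac{1}{u_z(d)^2}\Big)u_z(d)\,\zeta,
\]
where the $\zeta_x^2$-term drops out at linear order. Substituting $h_z|_{z=d}=\Phi_z|_{z=d}+(u_z(d)/d)\zeta+u_{zz}(d)\zeta$ and using $u_{zz}(d)=-\omega(1;\mu)$ (the stream equation at $z=d$) together with $u_z(d)\zeta=-\Phi|_{z=d}$ collapses the expression (after dividing by the normalization $2u_z(d)$) to
\[
\Phi_z\big|_{z=d}-\Big(\frac{1}{u_z(d;\mu)^2}-\frac{\omega(1;\mu)}{u_z(d;\mu)}\Big)\Phi\big|_{z=d},
\]
which is exactly \eqref{lin2}.

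The only delicate step is the bookkeeping in the cancellations; the key algebraic facts making them work are the two consequences of the stream equation ($u_{zzz}=-\omega'(u)u_z$ in the interior, $u_{zz}(d)=-\omega(1;\mu)$ at the top), and the fact that the Bernoulli constant $r(\mu)$ has been chosen precisely to match the stream solution so that $\hat{\mathcal F}_2(u,d;\mu)=0$. The purpose of the specific linear change $h=\Phi+zu_z\zeta/d$ is exactly to produce these cancellations, as was first observed in \cite{En2,En3}.
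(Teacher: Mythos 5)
Your approach is exactly the ``direct calculation'' the paper invokes: decompose $D_\Phi\mathcal F_j(0;\mu)$ by the chain rule through the affine substitution $(\hat\Phi,\eta)=(u+h,\,d+\zeta)$, reuse the linearization already recorded in Sect.~2.2, and then substitute $h=\Phi+zu_z\zeta/d$, $\zeta=-\Phi|_{z=d}/u_z(d)$. The $\mathcal F_1$ computation is correct (the cancellation via $u_{zzz}=-\omega'(u)u_z$ is exactly right), and you correctly use the consistent normalization $3r(\mu)=u_z(d;\mu)^2+2d$. However, your intermediate formula for the linearized Bernoulli condition carries the wrong sign: the first-order part of $\hat{\mathcal F}_2$, divided by $2u_z(d)$, is
\[
h_z\big|_{z=d}-\Big(\frac{u_z(d)}{d}-\frac{1}{u_z(d)}\Big)\zeta,
\]
not $h_z|_{z=d}+\big(\tfrac1d-\tfrac{1}{u_z(d)^2}\big)u_z(d)\zeta$ (compare the third display of Sect.~2.2). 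Indeed, the term $-\frac{\eta^2}{d^2}(3r-2\eta)$ contributes $+2\zeta-\frac{2\zeta u_z(d)^2}{d}$ at first order. With the sign as you wrote it, the term $\frac{u_z(d)}{d}\zeta$ coming from $h_z|_{z=d}=\Phi_z|_{z=d}+\frac{u_z(d)}{d}\zeta+u_{zz}(d)\zeta$ would \emph{add} rather than cancel, and the expression would collapse to $\Phi_z|_{z=d}-\frac{2}{d}\Phi|_{z=d}+\frac{\omega(1)}{u_z(d)}\Phi|_{z=d}+\frac{1}{u_z(d)^2}\Phi|_{z=d}$ instead of $\Phi_z|_{z=d}-\kappa\Phi|_{z=d}$. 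With the corrected sign the cancellation goes through exactly as you describe, so this is a local slip rather than a flaw in the method; note also that, strictly speaking, $D_\Phi\mathcal F_2(0;\mu)$ equals $2u_z(d)$ times the stated expression --- you flag this normalization factor, which the paper's statement silently drops.
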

\begin{proof}
The statement follows from a direct calculation.
\end{proof}

\subsection{Spectral decomposition}

According to our notations $\mu = (\mu_1,...,\mu_N)$ stands for the first $N$ eigenvalues of the Sturm-Liouville problem \eqref{SLdisp} for the triple $(\omega(\cdot,\mu), d, u(\cdot,\mu))$. Note that by construction these eigenvalues are negative, while all others are positive. Let
\[
\phi_1(z;\mu),...,\phi_N(z; \mu)
\]
be the corresponding eigenfunctions. We define projectors
\[
P\Phi = \sum_{j=1}^N \Phi_j \phi_j, \ \ \ \widetilde{P} = {\rm id} - P.
\]
Here $\Phi_j = \langle \Phi, \phi_j \rangle$ and $\langle \cdot, \cdot \rangle$ stands for the standard scalar product in $L^2(0,d)$. The projectors $P$ and $\widetilde{P}$ are orthogonal projectors in $L^2(0,d)$ and are well defined operators on $X$. Note that the complete set of eigenfunctions $\{\phi_j\}_{j \in \mathbb{N}}$ is a basis in the spaces
\[
H^n_0(0,d) = \{ \phi \in H^n(0,d): \ \ \phi(0) = 0 \}
\]
for $n=0$ and $n=1$. In general the eigenfunction are orthogonal with respect to the following bilinear form
\[
{\cal B}(\phi,\psi) = \int_0^d [\phi' \psi' - \omega'(u) \phi \psi] dz - \kappa \phi(d) \psi(d),
\]
which is well defined on $H^1_0(0,d) \times H^1_0(0,d)$. Integrating by parts, we find that
\[
{\cal B}(\phi,\phi_j) = \mu_j \langle \phi, \phi_j \rangle, \ \ j \in \mathbb{N}, \ \ \phi \in C^{2,\alpha}_0([0,d]).
\] 
In what follows we will often use the following identity
\begin{equation} \label{Bid}
\begin{split}
\int_0^d (-\phi_{zz} - \omega'(u(z))) \phi) \phi_j dz & = -[\phi_z(d)-\kappa \phi(d)] \phi_j(d) + {\cal B}(\phi,\phi_j)\\
&= -[\phi_z(d)-\kappa \phi(d)] \phi_j(d) + \mu_j \langle \phi, \phi_j \rangle,
\end{split}
\end{equation}
which is valid for any function $\phi \in C^{2,\alpha}_0([0,d])$ and $j \geq 1$.

Let us write equation \eqref{Op} in the following form, where linear and nonlinear parts of the operator are separated:
\begin{align}
& \Phi_{xx} + \Phi_{zz} + \omega'(u)\Phi = N_1(\Phi;\mu) \label{N1} \\
& \Phi_z\vert_{z=d} - \kappa \Phi \vert_{z=d} = N_2(\Phi; \mu) \label{N2},
\end{align}
where $N_1$ and $N_2$ are nonlinear parts of the operators ${\cal F}_1$ and ${\cal F}_2$ respectively. Thus, multiplying \eqref{N1} by $\phi_j$, $j=1,...,N$ and using \eqref{Bid}, we obtain equations for projections:
\begin{align}
 & \Phi_j'' - \mu_j \Phi_j = \langle N_1, \phi_j \rangle  - N_2 \phi_j(d) \label{Phij}, \ \ \ j=1,...,N;
\end{align}
and
\begin{align}
& \widetilde{\Phi}_{xx} + \widetilde{\Phi}_{zz} + \omega'(u) \widetilde{\Phi} = \widetilde{P}(N_1) + N_2 \sum_{j=1}^N \phi_j(d) \phi_j(z) \label{tild1} \\
& \widetilde{\Phi}_z(x,d)  - \kappa \widetilde{\Phi}(x,d) = N_2 \label{tild2}
\end{align}
where $\Phi_j = \langle \Phi, \phi_j \rangle$ and $\widetilde{\Phi} = \widetilde{P}\Phi$. It is clear that if some functions $\Phi_j$ and $\widetilde{\Phi} \in \textrm{ran}(\textrm{id} - P)$ solve equations \eqref{Phij}-\eqref{tild2}, then the function
\[
\Phi(x,z) = \sum_{j=1}^N \Phi_j(x) \phi_j(z) + \widetilde{\Phi}(x,z)
\]
solves \eqref{Op}.

We will show below that the function $\widetilde{\Phi}$ can be resolved from equations \eqref{tild1}-\eqref{tild2} as an operator of $(\Phi_1,...,\Phi_N)$ and $\mu$ in a neighbourhood of the origin.

We note that  an asymptotic analysis of solutions to ordinary differential equations with operator coefficients based on spectral decomposition was developed by Kozlov and Maz'ya in \cite{KM1} for linear problems and in \cite{KM2} for non-linear ones.

\subsection{The reduction to a finite-dimensional system}

First, we consider the linear part of \eqref{tild1}-\eqref{tild2} which is given by
\begin{align}
& \widetilde{\Phi}_{xx} + \widetilde{\Phi}_{zz} + \omega'(u) \widetilde{\Phi} = f \label{lin1} \\
& \widetilde{\Phi}_z(x,d)  - \kappa \widetilde{\Phi}(x,d) = g \label{lin2}\\
& \widetilde{\Phi}(x,0) = 0. \label{lin3}
\end{align}
Let $\widetilde{X}$ be the subspace of $X$ which consists of functions $\Phi$ satisfying
\[
\widetilde{P}[\Phi(x,\cdot)] = \Phi(x,\cdot)
\]
for all $x \in \RR$. Furthermore, we define the range space $\widetilde{Y}$ to be a subspace of $Y$ which consists of all $(f,g) \in Y$ satisfying
\[
\langle f(x,\cdot), \phi_j(\cdot) \rangle = g(x) \phi_j(d)
\]
for all $x \in \RR$. Let us define a linear operator $L:=(L_1,L_2):\widetilde{X} \to \widetilde{Y}$ by
\begin{align*}
& L_1 \widetilde{\Phi} = \widetilde{\Phi}_{xx} + \widetilde{\Phi}_{zz} + \omega'(u) \widetilde{\Phi} \\
& L_2 \widetilde{\Phi} = \widetilde{\Phi}_z(x,d)  - \kappa \widetilde{\Phi}(x,d).
\end{align*}
Now we are ready to prove

\begin{lemma} \label{iso} The mapping $L:\widetilde{X} \to \widetilde{Y}$ is a linear isomorphism and its norm depends only on $b, d$ and $\Lambda_*$.
\end{lemma}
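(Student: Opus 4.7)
The plan is to use simultaneous Fourier decomposition in $x$ and spectral (Sturm--Liouville) decomposition in $z$. Any $\widetilde{\Phi}\in\widetilde{X}$ admits a cosine expansion $\widetilde{\Phi}(x,z)=\sum_{k\geq 0}a_k(z)\cos(k_*x)$ with $k_*:=2\pi k/\Lambda_*$, and the constraint $\widetilde{P}\widetilde{\Phi}=\widetilde{\Phi}$ forces $\langle a_k,\phi_j\rangle=0$ for $j=1,\ldots,N$. Expanding $(f,g)$ analogously, the equation $L\widetilde{\Phi}=(f,g)$ decouples into the family of ODE problems
\begin{equation*}
a_k''+\omega'(u)a_k-k_*^2 a_k=f_k,\qquad a_k(0)=0,\qquad a_k'(d)-\kappa a_k(d)=g_k.
\end{equation*}
Pairing this with $\phi_m$, integrating by parts twice, and using $-\phi_m''-\omega'(u)\phi_m=\mu_m\phi_m$ together with $\phi_m'(d)=\kappa\phi_m(d)$, I obtain the algebraic identity
\begin{equation*}
(\mu_m+k_*^2)\langle a_k,\phi_m\rangle=g_k\phi_m(d)-\langle f_k,\phi_m\rangle.
\end{equation*}

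For $m\leq N$ the right side vanishes by the defining compatibility condition of $\widetilde{Y}$, which is consistent with $\langle a_k,\phi_m\rangle=0$. For $m\geq N+1$ the factor $\mu_m+k_*^2\geq\mu_{N+1}>0$ is bounded below uniformly in $k$, so the Fourier--spectral coefficients are uniquely determined. This yields both injectivity (set $f=g=0$) and a natural candidate for the inverse,
\begin{equation*}
\widetilde{\Phi}(x,z)=\sum_{k\geq 0}\cos(k_*x)\sum_{m\geq N+1}\frac{g_k\phi_m(d)-\langle f_k,\phi_m\rangle}{\mu_m+k_*^2}\,\phi_m(z),
\end{equation*}
whose $L^2(S)$-convergence follows from Parseval and the uniform gap $\mu_{N+1}>0$. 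I would bootstrap this $L^2$ solution to $C^{2,\alpha}$ via Schauder estimates for the uniformly elliptic equation $\widetilde{\Phi}_{xx}+\widetilde{\Phi}_{zz}+\omega'(u)\widetilde{\Phi}=f$ on the periodic strip, with Dirichlet data at $z=0$ and oblique data at $z=d$:
\begin{equation*}
\|\widetilde{\Phi}\|_{C^{2,\alpha}}\leq C\bigl(\|f\|_{C^{0,\alpha}}+\|g\|_{C^{1,\alpha}}+\|\widetilde{\Phi}\|_{C^0}\bigr),
\end{equation*}
with $C$ depending only on $\alpha$, $d$ and the $C^{0,\alpha}$ norms of $\omega'(u)$ and $\kappa$.

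The main obstacle is absorbing the $\|\widetilde{\Phi}\|_{C^0}$ term on the right into $(f,g)$ with a constant depending only on $b,d,\alpha$. I would handle this by first bounding $\|\widetilde{\Phi}\|_{L^2(S)}$ directly from the spectral formula using the gap $\mu_{N+1}>0$, then passing to $C^0$ via a standard Sobolev--Schauder iteration on the elliptic equation. Because $\Gamma$ is a fixed bounded neighborhood of $(\lambda_1,\ldots,\lambda_N)$ determined by $b$ and $d$, the quantities $\omega'(u(\cdot;\mu))$, $\kappa(\mu)$ and $\mu_{N+1}(\mu)$ depend continuously on $\mu$ and remain in a compact set with the spectral gap bounded away from zero, so both the Schauder constants and the spectral denominator can be controlled uniformly in $\mu\in\Gamma$, giving the claimed $b,d,\alpha$-dependence of $\|L^{-1}\|$.
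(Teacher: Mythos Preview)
Your approach is correct and reaches the same conclusion, but it differs from the paper's route in how the crucial $L^2$ bound is obtained. The paper does not expand in Fourier modes in $x$; instead it applies the Schauder estimate (with an $L^2$ lower-order term rather than $C^0$) and then derives
\[
\|\widetilde{\Phi}\|_{L^2(R_{\Lambda_*})}\leq C\bigl(\|f\|_{Y_1}+\|g\|_{Y_2}\bigr)
\]
by a direct energy argument: multiply \eqref{lin1} by $\widetilde{\Phi}$, integrate over the period cell, integrate by parts, and use the spectral representation in $z$ together with $\mu_{N+1}>0$ to make the quadratic form coercive. Your double Fourier/Sturm--Liouville expansion is a legitimate alternative; its advantage is that the explicit inverse formula establishes surjectivity transparently, whereas the paper's a~priori estimate only gives injectivity and boundedness of the inverse (surjectivity being tacit via Fredholm theory). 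The paper's energy argument, on the other hand, is slightly more robust: it sidesteps the boundary term that appears in your series.

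One technical caution: your claim that the $L^2$ convergence of the inverse series ``follows from Parseval and the uniform gap $\mu_{N+1}>0$'' is not quite sufficient for the $g$-contribution. The term $\sum_{m\geq N+1}g_k^2\phi_m(d)^2/(\mu_m+k_*^2)^2$ requires, in addition to the gap, the Weyl asymptotics $\mu_m\sim m^2$ and the uniform boundedness of $\phi_m(d)$, since $\sum_m\phi_m(d)^2$ itself diverges. This is standard for regular Sturm--Liouville problems and costs only a line, but you should state it; the paper's energy method avoids this issue entirely because the boundary term $\widetilde{\Phi}(x,d)g(x)$ is handled by a single Cauchy--Schwarz step rather than a spectral sum.
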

\begin{proof} First, we note that operator $L$ is bounded and $L(\widetilde{X}) = \widetilde{Y}$. To verify sujectivity it is enough to consider the right-hand side in \eqref{lin1}-\eqref{lin2} given by basic Fourier modes:
	\[
	f(x,z) = \hat{f}_{n}(z) \cos(n \tau_* x), \ \ g(x) = \hat{g}_{n} \cos(n \tau_* x),
	\]
	where $\tau_* = 2\pi \Lambda_*^{-1}$, $n \in \mathbb{N}\cup \{0\}$ and $\hat{f}_{n} \in C^{\alpha}([0,d])$. Furthermore, the relation
	\begin{equation} \label{frel}
	\langle \hat{f}_{n}, \phi_j \rangle = \hat{g}_{n} \phi_j(d), \ \ \ j=1,...,N
	\end{equation}
	is true. Thus, problem \eqref{lin1}-\eqref{lin3} is equivalent to
\begin{align*}
& \phi'' + (-n^2 \tau_*^2 + \omega'(u)) \phi = \hat{f}_{n} \\
& \phi_z(d) - \kappa \phi(d) =  \hat{g}_{n} \\
& \phi(0) = 0,
\end{align*}	
where $\phi \in \widetilde{P}(C^{2,\alpha}([0,d]))$. Now expanding $\phi = \sum_{j=1}^{\infty}c_j \phi_j$, one finds
\[
\langle \phi, \phi_j \rangle = (-n^2\tau^2_*-\mu_j)^{-1} (\langle \hat{f}_{n}, \phi_j \rangle - \hat{g}_{n} \phi_j(d)).
\]
Thus, we find a solution $\phi \in C^{2,\alpha}([0,d])$. Furthermore, because of \eqref{frel}, we have $\langle \phi, \phi_j \rangle = 0$ for all $1 \leq j \leq N $ so that $\phi \in \widetilde{P}(C^{2,\alpha}([0,d]))$. \\

We have shown that $L$ is surjective. It is left to show that $L$ is one-to-one mapping. To prove that  we need to obtain estimates of the solution by means of the right-hand side. For this purpose we apply Schauder estimate (see Theorem 7.3 \cite{ADN}) to the system \eqref{lin1}-\eqref{lin3} and find
\begin{equation} \label{L2}
\|\widetilde{\Phi}\|_{C^{2,\alpha}(R_{\Lambda_*})} \leq C [\|f\|_{Y_1} + \|g\|_{Y_2} + \|\widetilde{\Phi}\|_{L^2(R_{\Lambda_*})}],
\end{equation}
where $R_{\Lambda_*} = [-\Lambda_*/2,\Lambda_*/2] \times [0,d]$.  Let us show that
\begin{equation} \label{L1}
\|\widetilde{\Phi}\|_{L^2(R_{\Lambda_*})} \leq C^* [\|f\|_{Y_1} + \|g\|_{Y_2}].
\end{equation}
In order to prove \eqref{L1}, we multiply equation \eqref{lin1} by $\widetilde{\Phi}$ and integrate over $z \in [0,d]$, which gives
\begin{equation}\label{lemma_L:1}
\int_0^d \widetilde{\Phi}_{xx} \widetilde{\Phi} dz + \int_0^d [\widetilde{\Phi}_{zz} + \omega'(u)\widetilde{\Phi}] \widetilde{\Phi} dz = \int_0^d f \widetilde{\Phi} dz.
\end{equation}

To calculate the second integral on the left-hand side, we use \eqref{Bid} to show that
\[
\begin{split}
-\int_0^d [\widetilde{\Phi}_{zz} + \omega'(u)\widetilde{\Phi}] \phi_j dz & = - \phi_j(d) g(x) + \mu_j \int_0^d \widetilde{\Phi} \phi_j dz.
\end{split}
\]
Because the series
\[
\widetilde{\Phi} = \sum_{j=N+1}^{+\infty} \langle \widetilde{\Phi}, \phi_j \rangle \phi_j
\]
converges uniformly on $[0,d]$ (see Theorem 2.27, \cite{AlGw}), we find
\begin{equation}\label{lemma_L:2}
\begin{split}
 -\int_0^d [\widetilde{\Phi}_{zz} + \omega'(u)\widetilde{\Phi}] \widetilde{\Phi} dz  =
   & \sum_{j = N+1}^{\infty} \mu_k [\langle \widetilde{\Phi}, \phi_j \rangle ]^2 - \widetilde{\Phi}(x,d) g \geq \\
& \mu_{N+1} \int_0^d \widetilde{\Phi}^2 dz - \widetilde{\Phi}(x,d) g.
\end{split}
\end{equation}

Here $\{\mu_j\}_{j=1}^{\infty}$ stands for the set of all eigenvalues of the Sturm-Liouville problem \eqref{SLdisp} for the triple $(\omega(\cdot,\mu), d, u(\cdot,\mu))$. We recall that $\mu_j > 0$ for all $j > N$.

Now we integrate \eqref{lemma_L:1} over $R_{\Lambda_*}$. After integration by parts, we get
\[
\iint_{R_{\Lambda_*}} \widetilde{\Phi}_x^2 dx dz + \left[ - \iint_{R_{\Lambda_*}} [\widetilde{\Phi}_{zz} + \omega'(u)\widetilde{\Phi}] \widetilde{\Phi} dx dz  \right] \leq
\iint_{R_{\Lambda_*}} |f \widetilde{\Phi}|  dx dz.
\]
Combining this inequality with \eqref{lemma_L:2}, we arrive at
\[
 \iint_{R_{\Lambda_*}} \widetilde{\Phi}^2 dx dz \leq
 \mu_{N+1}^{-1} \left[ \iint_{R_{\Lambda_*}} |f \widetilde{\Phi}|  dx dz + \int_{[-\Lambda_*/2, \Lambda_*/2 ]} |\widetilde{\Phi}(x,d) g| dx \right].
\]
For an arbitrary $\beta > 0$, we write
\[
\iint_{R_{\Lambda_*}} |f \widetilde{\Phi}|  dx dz \leq \beta \iint_{R_{\Lambda_*}} \widetilde{\Phi}^2 dx dz + \beta^{-1} \iint_{R_{\Lambda_*}} f^2 dx dz
\]
and, similarly,
\[
\begin{split}
\int_{[-\Lambda_*/2, \Lambda_*/2 ]} |\widetilde{\Phi}(x,d) g|  dx & \leq \iint_{R_{\Lambda_*}} |\widetilde{\Phi}_z(x,z) g|  dx dz  \\
& \leq \beta \Lambda_* d \| \widetilde{\Phi} \|_{C^{2,\alpha}(R_{\Lambda_*})}^2 + \beta^{-1} \| g \|^2_{L^2([-\Lambda_*/2, \Lambda_*/2 ])}.
\end{split}
\]
Composing the last three inequalities, we obtain
\[
\|\widetilde{\Phi}\|_{L^2(R_{\Lambda_*})}^2 \leq \frac{2 \beta \Lambda_* d}{\mu_{N+1}} \|\widetilde{\Phi}\|^2_{X} + \frac{\Lambda_* d}{\beta \mu_{N+1}} \|f\|^2_{Y_1} + \frac{\Lambda_* d}{\beta \mu_{N+1}} \|g\|^2_{Y_2}.
\]
To complete the proof of the lemma it is left to combine this inequality with \eqref{L2}, provided
\[
\beta = \frac{\mu_{N+1}}{C 4 \Lambda_* d},
\]
where $C$ is the constant from \eqref{L2}.
\end{proof}

Before formulating the next theorem, let us define the space
\[
X_{red} = [C^{2,\alpha}_{per}(\RR)]^N,
\]
where the space $C^{2,\alpha}_{per}(\RR)$ consists of all even and $\Lambda_*$-periodical functions from $C^{2,\alpha}(\RR)$.

\begin{theorem} \label{Thred} For any $m\geq 1$ there exist open neighborhoods of the origin $U \subset X_{red}$, $V \subset \widetilde{X}$ and a smooth operator $h \in C^m(U; V)$ such that the function $\widetilde{\Phi}:=h(\Phi_1,...,\Phi_N)$ is the unique solution of the boundary problem \eqref{tild1}-\eqref{tild2}, provided $(\Phi_1,...,\Phi_N) \in U$.
\end{theorem}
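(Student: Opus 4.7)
The plan is to apply the implicit function theorem in Banach spaces to the system \eqref{tild1}--\eqref{tild2}, viewed as an equation for $\widetilde{\Phi}$ parameterized by $(\Phi_1,\ldots,\Phi_N)$. Writing $\Phi = \sum_{j=1}^N \Phi_j\phi_j + \widetilde{\Phi}$, I would define a map $G:X_{red}\times\widetilde{X}\to\widetilde{Y}$ by
\begin{align*}
G_1(\Phi_1,\ldots,\Phi_N,\widetilde{\Phi}) &= L_1\widetilde{\Phi} - \widetilde{P}N_1(\Phi;\mu) - N_2(\Phi;\mu)\sum_{j=1}^N \phi_j(d)\phi_j(z), \\
G_2(\Phi_1,\ldots,\Phi_N,\widetilde{\Phi}) &= L_2\widetilde{\Phi} - N_2(\Phi;\mu),
\end{align*}
so that solving $G=0$ is equivalent to solving \eqref{tild1}--\eqref{tild2}.

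Before invoking the IFT, I would verify that $G$ actually takes values in $\widetilde{Y}$. For the linear part, an integration by parts on $(0,d)$, using $\phi_l(0)=\widetilde{\Phi}(x,0)=0$ together with the Robin condition $\phi_l'(d)=\kappa\phi_l(d)$ and $-\phi_l''-\omega'(u)\phi_l=\mu_l\phi_l$, yields $\langle L_1\widetilde{\Phi}(x,\cdot),\phi_l\rangle = \phi_l(d)\,L_2\widetilde{\Phi}(x)$, so $L$ sends $\widetilde{X}$ into $\widetilde{Y}$ (this is also implicit in Lemma \ref{iso}). For the nonlinear part, $\widetilde{P}N_1$ is orthogonal to every $\phi_l$, and the extra correction $N_2\sum_{j}\phi_j(d)\phi_j$ is put in precisely so that the compatibility $\langle f(x,\cdot),\phi_l\rangle = g(x)\phi_l(d)$ of $\widetilde{Y}$ holds.

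At the base point $(\Phi_1,\ldots,\Phi_N,\widetilde{\Phi})=0$, since $N_1,N_2$ collect the genuinely nonlinear (at least quadratic) parts of $\mathcal{F}_1,\mathcal{F}_2$, one has $N_1(0)=N_2(0)=0$ and $D_\Phi N_1(0)=D_\Phi N_2(0)=0$. Hence $G(0,0)=0$ and the partial Fréchet derivative $D_{\widetilde{\Phi}}G(0,0)$ equals the linear operator $L:\widetilde{X}\to\widetilde{Y}$, which is an isomorphism by Lemma \ref{iso}. The implicit function theorem then yields open neighborhoods $U\subset X_{red}$ and $V\subset\widetilde{X}$ of the origin and a map $h:U\to V$ with $G(\Phi_1,\ldots,\Phi_N,h(\Phi_1,\ldots,\Phi_N))=0$, and with $\widetilde{\Phi}=h(\Phi_1,\ldots,\Phi_N)$ being the unique solution in $V$.

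The main technical point is the $C^m$ regularity of $h$, which reduces to $C^m$ regularity of $G$. This in turn requires that the Nemytskii-type operators appearing in $N_1,N_2$ — compositions of $\omega(\cdot;\mu)\in C^\infty$ with $\Phi+u+zu_z(\eta-d)/d$, pointwise products, and division by $\eta = d-\Phi|_{z=d}/u_z(d)$ — are $C^m$ between the Hölder spaces $X$ and $Y$. This is a standard property of smooth Nemytskii operators on $C^{k,\alpha}$, provided one restricts to a neighborhood of $\Phi=0$ where $\eta$ stays bounded away from zero; arbitrary smoothness (indeed $C^\infty$) follows. I expect this regularity bookkeeping to be the only nontrivial obstacle, as the structural ingredients — the mapping property into $\widetilde{Y}$, the vanishing of $D_\Phi N_j$ at zero, and the isomorphism provided by Lemma \ref{iso} — are all already in hand.
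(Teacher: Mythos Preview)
Your proposal is correct and follows exactly the route the paper takes: the paper's own proof simply states that the result follows directly from Lemma~\ref{iso} and the implicit function theorem applied to \eqref{tild1}--\eqref{tild2}. You have spelled out the details (the mapping property into $\widetilde{Y}$, the vanishing of $N_j$ and $D_\Phi N_j$ at zero, and the $C^m$ regularity of the Nemytskii operators) that the paper leaves implicit.
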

\begin{proof} The statement follows directly from Lemma \ref{iso} and the implicit function theorem applied to the system \eqref{tild1}-\eqref{tild2}.
\end{proof}

Theorem \ref{Thred} allows to reduce the problem for small-amplitude waves to a finite dimensional system of the form
\begin{equation} \label{reduced}
\Phi_j'' - \mu_j \Phi_j = N_j(\bar{\Phi}; \mu),
\end{equation}
where $\bar{\Phi} = (\Phi_1,...,\Phi_N)$ and $\Phi_j \in C^{2,\alpha}_{per}(\RR)$, $j=1,...,N$. The nonlinear operators $N_j$ are smooth and
\[
D_{\bar{\Phi}}N_j(0; \mu) = 0.
\]

\subsection{The Lyapunov-Schmidt reduction and the existence of $N$-modal waves}

The kernel of the linear operator on the left-hand side in \eqref{reduced} for $\mu = \mu^*$ is spanned by
\[
\xi_1=\begin{pmatrix}
\cos(\sqrt{|\mu_1^*|}x) \\
0 \\ \vdots \\ 0
\end{pmatrix},
\ \xi_2=\begin{pmatrix} 0 \\
\cos(\sqrt{|\mu_2^*|}x) \\
 \vdots \\ 0
\end{pmatrix},...,
\ \xi_N=\begin{pmatrix}
0 \\ \vdots \\ 0 \\
\cos(\sqrt{|\mu_N^*|}x)
\end{pmatrix}.
\]
Let us write
\begin{equation} \label{ans}
\Phi_j = t_j \cos(\sqrt{|\mu_j^*|}x) + \zeta_j,
\end{equation}
where $\zeta_j$ is orthogonal to $\cos(\sqrt{|\mu_j^*|}x)$ in $L^2(-\Lambda_*/2, \Lambda_*/2)$. We note that the vector function $(\zeta_1,...,\zeta_N)$ is orthogonal to all $\xi_j, j=1,...,N$, and hence this splitting corresponds to the Lyapunov-Schmidt decomposition under the action of the projection $\bar{\Phi} = (\Phi_1,...,\Phi_N)$ to the space $X_N$ spanned by $\xi_1,...,\xi_N$.

Thus, substituting \eqref{ans} into \eqref{reduced} and taking the projection on $X_N$ and its compliment, we arrive at the equations
\begin{align}
& \zeta_j''-\mu_j^* \zeta_j  =  \zeta_j (\mu_j - \mu_j^*) + N^*_j(t,\zeta; \mu) - G_j(t, \zeta; \mu) \cos(\sqrt{|\mu_j^*|}x) , \label{red1} \\
& t_j (\mu_j - \mu_j^*) = G_j(t, \zeta; \mu). \label{red2}
\end{align}
The nonlinear parts are defined by
\[
N^*_j(t,\zeta; \mu) = N_j(\bar{\Phi}; \mu), \ \ G_j(t, \zeta; \mu) = \frac{2}{\Lambda_*}\int_{-\Lambda_*/2}^{\Lambda_*/2} N_j(\bar{\Phi}; \mu) \cos(\sqrt{|\mu_j^*|}x) dx,
\]
where $\bar{\Phi} = (\Phi_1,...,\Phi_N)$ depends on $t$ and $\zeta$ via \eqref{ans}. The linear operator on the left-hand side in \eqref{red1} defined on the space $X_*^N$, where
\[
X_* = \{g \in C^{2,\alpha}_{per}(\RR): \int_{-\Lambda_*/2}^{\Lambda_*/2} g(x) \sin{\sqrt{|\mu_j^*|x}} dx = 0, \ \ j=1,...,N \}
\]
with values in
\[
\{g \in C^{\alpha}_{per}(\RR): \int_{-\Lambda_*/2}^{\Lambda_*/2} g(x) \sin{\sqrt{|\mu_j^*|x}} dx = 0, \ \ j=1,...,N \}
\]
is invertible, while the right hand side is presented by some terms of order $|t|^2$ plus small perturbations of $\zeta$. Thus, using implicit function theorem, we can resolve $\zeta = (\zeta_1,...,\zeta_N)$ from \eqref{red1} as an operator of $t = (t_1,...,t_N)$ and $\mu = (\mu_1,...,\mu_N)$, provided $|t|$ and $|\mu - \mu^*|$ are small enough. Furthermore, the following estimate is valid:
\[
\|\zeta_j\|_{C^{2,\alpha}(-\Lambda_*/2, \Lambda_*/2)} \leq C |t|^2.
\]
 Therefore, system \eqref{red1}-\eqref{red2} is reduced to a system of scalar equations:
\begin{equation} \label{tmu}
t_j (\mu_j - \mu_j^*) = G_j(t, \zeta(t,\mu) \mu),
\end{equation}
where the nonlinear term satisfies
\begin{equation} \label{gj}
|G_j(t, \mu)| \leq C |t|^2, \ \ \left|\frac{\partial G_j }{\partial \mu} (t, \mu)\right| \leq C |t|^2
\end{equation}
for all sufficiently small $|t|$ and $|\mu|$, while the constant $C$ is independent of $\mu$. Let us rewrite \eqref{tmu} as
\[
\mu_j = \mu_j^* + G_j(t,\mu) / t_j.
\]
Thus, if $|t|^2/|t_j| \leq 1/(2C)$, where $C$ is the constant from \eqref{gj}, we can apply fixed point theorem to resolve $\mu$ as a function of $t$ so that
\[
\mu_j = \mu_j^* + O(|t|^2/\epsilon).
\]
Thus, tracking back all changes of variables, we find a solution $(\Phi,\mu)$ to \eqref{Op} such that
\[
\Phi(x,z; \mu) = \sum_{j=1}^N t_j \cos(\sqrt{|\mu_j^*|} x) \phi_j(z) + O(|t|^2).
\]
Now the formula \eqref{eta} recovers the profile as
\[
\eta(x) = d - \sum_{j=1}^N \frac{t_j}{u'(d)} \cos(\sqrt{|\mu_j^*|} x) \phi_j(z) + O(|t|^2).
\]
Using these identities, we can express the stream function via
\[
\begin{split}
\psi(x,y) &= u(yd/\eta(x)) \\
&+ \sum_{j=1}^N t_j \cos(\sqrt{|\mu_j^*|} x) \left[ \phi_j(yd/\eta(x)) - \frac{yd u_y(yd/\eta(x))}{\eta(x)}u_y(d)) \phi_j(d) \right]  \\
&+ O(|t|^2).
\end{split}
\]
This completes the proof of the theorem. \\

\textbf{Acknowledgements.} The authors are thankful to the anonymous referee for the help in improving the article. V. K. acknowledges the support of the Swedish Research Council (VR) grant EO418401.

\end{document}